\documentclass[a4paper,11pt]{article}

\usepackage[dvips]{graphics, epsfig, color}
\usepackage{graphics}
\usepackage{graphicx}
 \usepackage{amsfonts}

\setlength{\headheight}{0mm}
\setlength{\oddsidemargin}{-0mm}
\setlength{\topmargin}{-15mm}
\setlength{\textwidth}{160mm}
\setlength{\textheight}{220mm}

\renewcommand{\title}[1]{\vspace{\fill}
\eject\addtolength{\baselineskip}{4pt}
{\bfseries\LARGE #1}\\[3mm]\addtolength{\baselineskip}{-4pt}}
\renewcommand{\author}[3]{\parbox[t]{75mm}
{\begin{center}{\scshape #1}\\[3mm] #2\\
 {\ttfamily #3} \end{center}}}

\newtheorem{thm}{\bfseries Theorem}
\newtheorem{remark}[thm]{\bfseries Remark}    
\newtheorem{prop}[thm]{\bfseries Proposition} 

\newtheorem{conj}[thm]{\bfseries Conjecture}

\newenvironment{proof}{\medskip                    
\noindent{\scshape Proof:}}{\quad $\Box$\medskip}  


\begin{document}

\thispagestyle{empty}

\begin{center}

\title{Hyperplane Arrangements\\ with Large Average Diameter}

\author{
Antoine Deza $\quad$ and $\quad$ Feng Xie}{September 23, 2007 \\$\:$\\
McMaster University\\
Hamilton, Ontario, Canada }{ deza, xief @mcmaster.ca }
\end{center}

\begin{quote}
{\bfseries Abstract:}
The largest possible average diameter of a bounded cell of a simple
hyperplane arrangement is conjectured to be not greater than the
dimension. We prove that this conjecture holds in dimension 2,
and is asymptotically tight in fixed dimension.
We give the exact value of the largest possible average diameter for
all simple arrangements in dimension 2, for arrangements having at
most the dimension plus 2 hyperplanes, and for arrangements having
6 hyperplanes in dimension 3. In dimension 3, we
give lower and upper bounds which are both asymptotically
equal to the dimension.
\end{quote}

\begin{quote}
{\bf Keywords: hyperplane arrangements, bounded cell, average diameter  }
\end{quote}
\vspace{5mm}

\section{Introduction}
Let $\mathcal{A}$ be a simple arrangement formed by
$n$ hyperplanes in dimension $d$. We recall that an arrangement is
called simple if $n\geq d+1$ and any  $d$ hyperplanes intersect at a unique
distinct point. The number of bounded cells (closures of the bounded
connected components of the complement) of
$\mathcal{A}$ is $I={n-1\choose d}$. Let
$\delta(\mathcal{A})$ denote the average diameter of a bounded cell $P_i$ of
$\mathcal{A}$; that is,
$$\delta(\mathcal{A})=\frac{\sum_{i=1}^{i=I}\delta(P_i)}{I}$$
where  $\delta(P_i)$ denotes the diameter of $P_i$, i.e.,  the
smallest number such that any two vertices of  $P_i$ can be
connected by a path with at most $\delta(P_i)$ edges. Let
$\Delta_{\mathcal{A}}({d,n})$ denote the largest possible average
diameter of a bounded cell of a simple arrangement defined by $n$
inequalities in dimension $d$. Deza, Terlaky and
Zinchenko conjectured  that
$\Delta_{\mathcal{A}}({d,n})\leq d$.
\begin{conj}{\em \cite{DTZ07}}\label{dShub}
The average diameter of a bounded cell of a simple arrangement defined by $m$
inequalities in dimension $n$ is not greater than $n$.
\end{conj}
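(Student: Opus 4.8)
The plan is to treat the conjecture in two parts: establish the bound in full for $d=2$, where a clean combinatorial reduction is available, and then argue that no constant smaller than $d$ can replace it by exhibiting arrangements whose average diameter tends to $d$. The general upper bound in fixed dimension $d\ge 3$ I expect to remain out of reach by these methods, so I would only aim at the cases singled out in the abstract (dimension $2$, at most $d+2$ hyperplanes, and $6$ hyperplanes in dimension $3$).

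For $d=2$ I would start from the fact that a bounded cell of a simple line arrangement is a convex polygon, and a convex $k$-gon has diameter exactly $\lfloor k/2\rfloor$. Writing $k_i$ for the number of edges of the bounded cell $P_i$, this gives $\delta(P_i)=\lfloor k_i/2\rfloor\le k_i/2$, hence $\sum_i \delta(P_i)\le \frac12\sum_i k_i$, so the whole problem reduces to estimating the total number of (bounded cell, edge) incidences $\sum_i k_i$. Since $n$ lines in general position cut each line into $2$ rays and $n-2$ bounded segments, there are exactly $b=n(n-2)$ bounded edges, each shared by two cells. Letting $E_{bu}$ (resp.\ $E_{uu}$) count the bounded edges with exactly one (resp.\ two) unbounded neighbours, a double count of (bounded edge, cell) incidences gives $\sum_i k_i = 2b-(E_{bu}+2E_{uu})$. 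With $I={n-1\choose 2}$ the target $\delta(\mathcal A)\le 2$ amounts to $\sum_i k_i\le 4I=2(n-1)(n-2)$, and after substituting $b=n(n-2)$ this is precisely the inequality
$$E_{bu}+2E_{uu} \ge 2(n-2).$$

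Thus everything rests on one geometric lemma: the number of incidences between bounded edges and unbounded cells is at least $2(n-2)$. This is the step I expect to be the main obstacle, because $E_{bu}+2E_{uu}$ is exactly the total number of segment-edges carried by the $2n$ unbounded cells, i.e.\ the segment-complexity of the zone of the line at infinity, and one needs a matching \emph{lower} bound rather than the usual upper bound; note also that a central line may have all its bounded segments bordered by bounded cells on both sides, so the estimate cannot be localized to individual lines. My plan for it is to walk along the $2n$ rays: each ray has an innermost endpoint at which its adjacent unbounded cell turns onto another line, and generically this produces a fresh segment on the boundary of that unbounded cell; careful bookkeeping of these turns, together with the exceptional ones occurring at convex-hull vertices where a ray meets a ray, should yield $2(n-2)$. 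Since $\delta(P_i)\le k_i/2$ is already wasteful for triangles (odd cells), the displayed inequality carries slack for small $n$; recovering the exact value of $\Delta_{\mathcal A}(2,n)$ claimed in the abstract would require tracking the floor corrections jointly with the exact value of $E_{bu}+2E_{uu}$ on the extremal configurations.

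For the asymptotic tightness in fixed dimension I would not attempt a general upper bound at all, but instead construct explicit families of simple arrangements whose bounded cells are, for most indices $i$, combinatorially close to products of polygons, so that a positive fraction of the cells attains diameter close to $d$, and then check that the average over the ${n-1\choose d}$ cells approaches $d$ as $n\to\infty$. The small-$n$ exact values would be settled by a direct enumeration of the finitely many combinatorial types, using the same $\sum_i\delta(P_i)$ bookkeeping as a consistency check.
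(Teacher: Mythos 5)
Your program coincides with the paper's: the $d=2$ case is handled by exactly the same double count of (bounded cell, edge) incidences, reducing to a lower bound on the number of incidences between bounded edges and unbounded cells --- the paper imports $f_1^0(\mathcal{A})\ge 2(n-1)$ from \cite{BDX07} rather than proving your ray-walking lemma, and that bound already implies your weaker target $E_{bu}+2E_{uu}\ge 2(n-2)$ --- while asymptotic tightness is obtained, as you propose, from an explicit cube-rich family, namely the cyclic-type arrangement $\mathcal{A}^*_{\:d,n}$ with ${n-d\choose d}$ cubical cells among ${n-1\choose d}$ bounded cells. The only cost of your relaxation $\delta(P_i)\le k_i/2$ is the exact value of $\Delta_{\mathcal{A}}(2,n)$, which the paper recovers by keeping the floor and charging the parity loss to the $n-2$ triangles guaranteed by \cite{S79}.
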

It was showed in~\cite{DTZ07} that if the
conjecture of Hirsch holds for polytopes in dimension $d$, then
$\Delta_{\mathcal{A}}(d,n)$  would satisfy
$\Delta_{\mathcal{A}}(d,n)\leq d+\frac{2d}{n-1}$. In dimension $2$
and $3$, we have $\Delta_{\mathcal{A}}(2,n)\leq
2+\frac{2}{n-1}$ and $\Delta_{\mathcal{A}}(3,n)\leq
3+\frac{4}{n-1}$. We recall that a polytope is a bounded polyhedron
and that the conjecture of Hirsch, formulated in 1957 and reported
in~\cite{D63}, states that the diameter of a polyhedron defined by
$n$ inequalities in dimension $d$ is not greater than $n-d$. The
conjecture does not hold for unbounded polyhedra.

Conjecture~\ref{dShub} can be regarded a discrete analogue of a result of
Dedieu, Malajovich and Shub~\cite{DMS05} on the average total curvature
of the central path associated to a bounded cell of a simple arrangement.
We first recall the definitions of the
central path and of the total curvature. For a polytope $P=\{x:
Ax\geq b\}$ with $A\in\Re^{n\times d}$, the central path
corresponding to $\min\{c^Tx:x\in P\}$ is a set of minimizers of
$\min\{c^Tx+\mu f(x):x\in P\}$ for $\mu\in(0,\infty)$ where
$f(x)=-\sum_{i=1}^n\ln (A_ix-b_i)$ -- the standard logarithmic
barrier function~\cite{RTV97}.
Intuitively, the total curvature~\cite{S90} is a measure of how far
off a certain curve is from being a straight line. Let
$\psi:[\alpha,\beta]\rightarrow\Re^d$ be a
$C^2((\alpha-\varepsilon,\beta+\varepsilon))$ map for some
$\varepsilon>0$ with a non-zero derivative in $[\alpha,\beta]$.
Denote its arc length by $l(t)=\int_{\alpha}^t
\|\dot{\psi}(\tau)\|d\tau$, its parametrization by the arc length by
$\psi_{\rm arc}=\psi\circ l^{-1}: [0,l(\beta)]\rightarrow\Re^d$, and
its curvature at the point $t$ by $\kappa(t)=\ddot{\psi}_{\rm
arc}(t)$. The total curvature is defined as $\int_0^{l(\beta)}
\|\kappa(t)\|dt$. The requirement $\dot{\psi}\neq0$ insures that any
given segment of the curve is traversed only once and allows to
define a curvature at any point on the curve. Let $\lambda^c({\mathcal A})$
denote the average associated total curvature
of a  bounded cell $P_i$ of a simple arrangement ${\mathcal A}$; that is,
$$\lambda^c({\mathcal A})=\sum_{i=1}^{i=I}\frac{\lambda^c(P_i)}{I}$$
where $\lambda^c(P)$ denotes the total curvature of the central path
corresponding to the linear optimization problem $\min \{ c^Tx :
x\in P\}$.  Dedieu,
Malajovich and Shub~\cite{DMS05} demonstrated that
$\lambda^c({\mathcal A})\leq 2\pi d$ for any fixed $c$.
Keeping the linear optimization approach but replacing central path
following interior point methods by simplex methods, Haimovich's probabilistic
analysis of the shadow-vertex simplex algorithm, see~\cite[Section 0.7]{B87},
showed that the expected number of pivots is bounded by $d$.
Note that while Dedieu, Malajovich and Shub consider only the bounded
cells (the central path may not be defined over some unbounded ones),
Haimovich considers the average over bounded and unbounded cells.
While the result of Haimovich and Conjecture~\ref{dShub} are similar
in nature, they differ in some aspects:
Conjecture~\ref{dShub}  considers the average over bounded cells, and
the number of pivots could be smaller than the diameter for some cells.

In Section~\ref{dd} we consider a simple hyperplane arrangement ${\mathcal A}^*_{\:d,n}$
combinatorially equivalent to the cyclic hyperplane arrangement which is dual to
the cyclic polytope, see~\cite{FR01} for some combinatorial properties of the (projective) cyclic hyperplane arrangement.
We show that the bounded cells of ${\mathcal A}^*_{\:d,n}$ are mainly combinatorial cubes
and, therefore, that the dimension $d$ is
an asymptotic lower bound for $\Delta_{\cal A}(d,n)$ for fixed $d$.
In Section~\ref{2d}, we consider the arrangement ${\mathcal A}^o_{\:2,n}$
resulting from the addition of one hyperplane to ${\mathcal A}^*_{\:2,n-1}$
such that all the vertices are on one side of the added hyperplane.
We show that the arrangement ${\mathcal A}^o_{\:2,n}$ maximizes the average
diameter and, thus, Conjecture~\ref{dShub} holds in dimension 2.
In Section~\ref{3d}, considering a $3$-dimensional analogue, we give lower and
upper bounds asymptotically equal to 3 for $\Delta_{{\mathcal A}}(3,n)$.
The combinatorics of the addition of a (pseudo) hyperplane to the cyclic hyperplane arrangement
is studied in details in~\cite{Z93}. For example,  the arrangements ${\mathcal A}^*_{\:2,6}$
and ${\mathcal A}^o_{\:2,6}$ correspond to the top and bottom elements of the
higher Bruhat order $B(5,2)$ given in Figure 3 of~\cite{Z93}.
For polytopes and arrangements, we refer to the books of Edelsbrunner~\cite{E87},
Gr\"unbaum~\cite{G03} and Ziegler~\cite{Z95}. 

\section{Line Arrangements with Maximal Average Diameter} \label{2d}
For $n\geq 4$, we consider the simple line arrangement
$\mathcal{A}^o_{\:2,n}$ made of the $2$ lines $h_1$ and $h_2$
forming, respectively, the $x_1$ and $x_2$ axis, and the $(n-2)$ lines
defined by their intersections with $h_1$ and $h_2$. We have
$h_k\cap h_1=\{1+(k-3)\varepsilon,0\}$ and $h_k\cap
h_2=\{0,1-(k-3)\varepsilon\}$ for $k=3,4,\dots,n-1$, and $h_n\cap
h_1=\{2,0\}$ and $h_n\cap h_1=\{0,2+\varepsilon\}$ where
$\varepsilon$ is a constant satisfying
$0<\varepsilon<1/(n-3)$. See Figure~\ref{A072} for an
arrangement combinatorially equivalent to  $\mathcal{A}^o_{\:2,7}$.

\begin{figure}[hbt]
\begin{center}
\epsfig{file=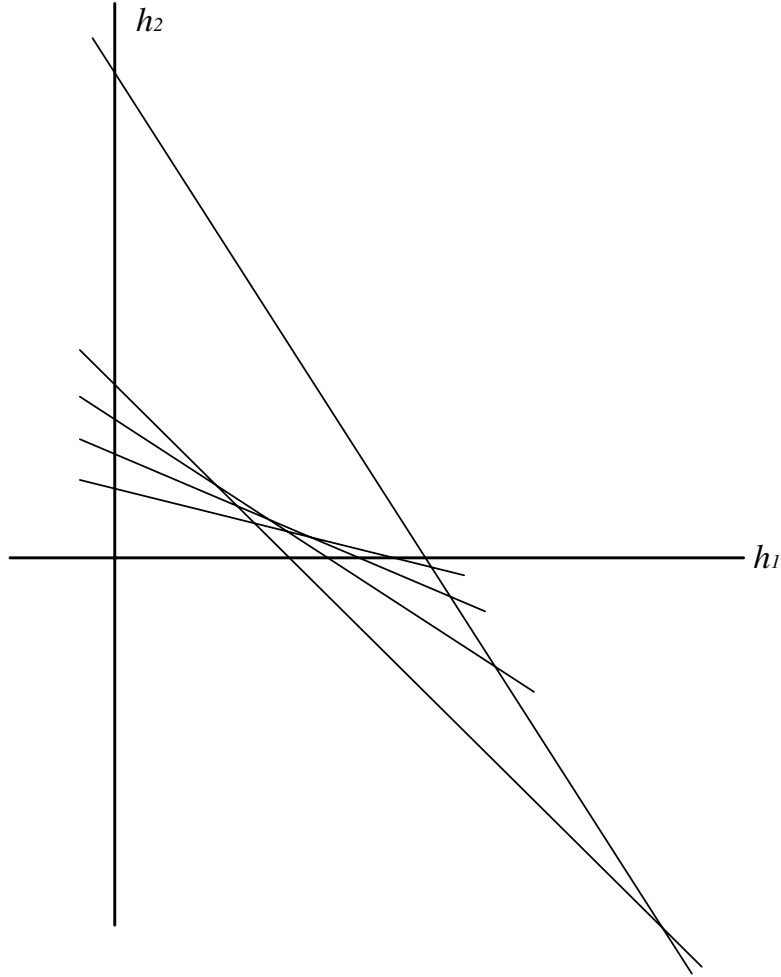, height=13.0cm} \caption{An arrangement
combinatorially equivalent to $\mathcal{A}^o_{\:2,7}$} \label{A072}
\end{center}
\end{figure}

\begin{prop}
For $n\geq 4$, the bounded cells of the  arrangement ${\mathcal
A}^{o}_{\:2,n}$ consist of $(n-2)$ triangles, $\frac{(n-1)(n-4)}{2}$
4-gons, and $1$ $n$-gon. We have $\delta({\mathcal
A}^{o}_{\:2,n})=2-\frac{2\lceil\frac{n}{2}\rceil}{(n-1)(n-2)}$ for
$n\geq 4$.
\end{prop}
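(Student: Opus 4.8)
The plan is to reduce the average‑diameter statement to the cell census, and then to establish the census by viewing $\mathcal{A}^{o}_{\:2,n}$ as a tangent‑to‑a‑parabola (cyclic) arrangement together with one extra secant line. For the reduction, note that the $1$‑skeleton of a convex $k$‑gon is a $k$‑cycle, whose graph diameter is $\lfloor k/2\rfloor$; so a triangle contributes $1$, a $4$‑gon contributes $2$, and the unique $n$‑gon contributes $\lfloor n/2\rfloor$. Granting the census, one gets
$$\sum_{i}\delta(P_i)=(n-2)\cdot 1+\frac{(n-1)(n-4)}{2}\cdot 2+\left\lfloor\frac{n}{2}\right\rfloor,$$
and dividing by $I={n-1\choose 2}=\frac{(n-1)(n-2)}{2}$ and using $\lfloor n/2\rfloor+\lceil n/2\rceil=n$ yields exactly $2-\frac{2\lceil n/2\rceil}{(n-1)(n-2)}$. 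Thus everything reduces to proving the stated numbers of triangles, $4$‑gons, and the $n$‑gon.

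Next I would pin down the geometry. A direct check shows that every line $h_1,\dots,h_{n-1}$ is tangent to the parabola $\mathcal{P}:\,(x_2-x_1)^2=4(x_1+x_2-1)$: with $u=(k-3)\varepsilon$ the line $h_k$ is the tangent at parameter $u$, and $h_1,h_2$ are the tangents at $u=1$ and $u=-1$. Hence $\mathcal{A}^{*}_{\:2,n-1}=\{h_1,\dots,h_{n-1}\}$ is a simple arrangement of $N=n-1$ tangent lines with ordered parameters $-1<0<\varepsilon<\dots<(n-4)\varepsilon<1$, while $h_n$ is a secant meeting each axis strictly beyond all tangent points. The basic identity that the two tangents at parameters $s<t$ meet at the point whose abscissa is $\frac{s+t}{2}$ immediately places all ${n-3\choose 2}$ inner crossings in the open first quadrant and every crossing $h_n\cap h_k$ in the open fourth quadrant, since $h_n$ is steeper and has larger intercepts than each $h_k$.

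The core combinatorial step is the census of $\mathcal{A}^{*}_{\:2,n-1}$, which I would obtain from the following lemma, proved by induction on $N$: a simple arrangement of $N\geq 3$ tangent lines to a parabola has exactly $N-2$ triangular and ${N-2\choose 2}$ quadrilateral bounded cells, and no others. In the inductive step one adds the line $\ell$ of largest parameter $t_N$. By the abscissa identity, $\ell$ meets each older line at abscissa $\frac{t_i+t_N}{2}$, which lies to the right of every existing vertex on that line; hence $\ell$ crosses each old line only on an unbounded ray, meets no bounded edge, and therefore enters no bounded cell. The old cells thus survive, and the $N-2$ bounded segments into which $\ell$ is cut carve $N-2$ new cells from the unbounded region: the segment between the crossings with the two old lines of next‑largest parameter closes a triangle (those two lines already meet each other), while the remaining $N-3$ segments each close a quadrilateral. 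Taking $N=n-1$ gives $n-3$ triangles and ${n-3\choose 2}$ $4$‑gons, all contained in the closed first quadrant.

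Finally I would add $h_n$ by the same device: $h_n$ meets $h_1$ at $(2,0)$, $h_2$ at $(0,2+\varepsilon)$, and each inner line on its fourth‑quadrant ray, all beyond the existing vertices, so no bounded cell of $\mathcal{A}^{*}_{\:2,n-1}$ is disturbed and $h_n$ splits off exactly $n-2$ new bounded cells. Reading the crossings along $h_n$ in order, namely $h_2,\,h_1,\,h_{n-1},h_{n-2},\dots,h_3$, the long segment from $h_2$ to $h_1$ crosses no inner line and so faces the entire upper envelope of the inner lines; as these tangents are in convex position, their upper envelope uses all $n-3$ of them, giving $n-4$ interior vertices, and together with the two points where the envelope reaches the axes and the two points where $h_n$ meets the axes this bounds the unique $n$‑gon, since $(n-4)+2+2=n$. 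The next segment, from $h_1$ to $h_{n-1}$, closes a single triangle, and each of the remaining $n-4$ segments, lying between two consecutive inner lines, closes a $4$‑gon. Summing, the triangles number $(n-3)+1=n-2$, the $4$‑gons number ${n-3\choose 2}+(n-4)=\frac{(n-1)(n-4)}{2}$, and there is one $n$‑gon, as required. I expect the main obstacle to be precisely these two incremental arguments: verifying that each newly added line meets every earlier line beyond that line's last vertex (so that all existing bounded cells persist), and then correctly deciding, segment by segment, whether a new bounded cell is a triangle, a $4$‑gon, or --- for the single ``over the top'' segment of $h_n$ --- the $n$‑gon. The required upper‑ and lower‑envelope descriptions of the tangent family underlie all of these identifications.
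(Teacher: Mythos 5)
Your proposal is correct and follows essentially the same route as the paper: build $\mathcal{A}^{o}_{\:2,n}$ incrementally, count the triangles, $4$-gons and the single $n$-gon, and convert to the average via $\delta(k\text{-gon})=\lfloor k/2\rfloor$. The only difference is one of detail: the paper simply asserts that $\mathcal{A}^{*}_{\:2,n-1}$ ``clearly'' has $(n-3)$ triangles and ${n-3\choose 2}$ $4$-gons and that $h_n$ adds one $n$-gon, one triangle and $(n-4)$ $4$-gons, whereas you supply the justification (the tangent-to-a-parabola description and the each-new-line-misses-all-old-bounded-cells induction), and your identifications of the new cells' shapes check out.
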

\begin{proof}
The first $(n-1)$ lines of ${\mathcal A}^{o}_{\:2,n}$ clearly form a
simple line arrangement $\mathcal{A}^*_{\:2,n-1}$
which bounded cells are $(n-3)$ triangles
and ${n-3 \choose 2}$ 4-gons. The last line $h_n$ adds 1 $n$-gon,
1 triangle and $(n-4)$ 4-gons.
Since the diameter of a $k$-gon is $\lfloor\frac{k}{2}\rfloor$, we
have $\delta({\mathcal
A}^{o}_{\:2,n})=2-2\frac{(n-2)-(\lfloor\frac{n}{2}\rfloor-2)}{(n-1)(n-2)}=2-\frac{2\lceil\frac{n}{2}\rceil}{(n-1)(n-2)}$.
\end{proof}

Exploiting the fact that a line arrangement contains at least $n-2$ triangles (at least $n-d$ simplices
for a simple hyperplane arrangement~\cite{S79}) and a bound on the number
of facets on the boundary of the union of the bounded cells,  we can show that ${\mathcal A}^{o}_{\:2,n}$
attains the largest possible average diameter of a simple line arrangement.
\begin{prop}
For $n\geq 4$, the largest possible average diameter of a bounded cell of a simple
line arrangement satisfies  $\Delta_{{\mathcal A}}(2,n)=2-\frac{2\lceil\frac{n}{2}\rceil}{(n-1)(n-2)}$.
\end{prop}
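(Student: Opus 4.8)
The plan is to show that $\delta(\mathcal{A}^o_{2,n})$ is an upper bound for every simple line arrangement on $n$ lines by controlling the total number of edges over all bounded cells. Writing $\delta(\mathcal{A}) = \frac{1}{I}\sum_i \delta(P_i)$ with $I = \binom{n-1}{2}$, and using that a $k$-gon has diameter $\lfloor k/2 \rfloor$, I would express the total diameter $\sum_i \delta(P_i) = \sum_i \lfloor k_i/2 \rfloor$ in terms of $\sum_i k_i$, the total number of (cell, edge) incidences among bounded cells, together with a correction term that counts the odd-gons. Since $\lfloor k/2 \rfloor = \frac{k}{2} - \frac{1}{2}[k \text{ odd}]$, maximizing the average diameter amounts to maximizing $\sum_i k_i$ while simultaneously minimizing the number of odd-gons (equivalently, keeping the number of triangles and other odd cells small).

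First I would count $\sum_i k_i$, the total number of edges counted with multiplicity over bounded cells. Each bounded edge of the arrangement lies on the boundary of either one or two bounded cells; an edge is counted twice if it is interior to the union of the bounded cells and once if it lies on the boundary of that union. So $\sum_i k_i = 2E_b - B$, where $E_b$ is the number of bounded edges and $B$ is the number of bounded edges lying on the boundary of the union of the bounded cells. The quantity $E_b$ is determined combinatorially by $n$ (it is the same for every simple line arrangement, being governed by the Euler relation and the fact that all $\binom{n}{2}$ intersection points are simple), so the only freedom is in $B$ and in the odd-gon count. This reduces the problem to the following two-sided estimate: to make $\delta$ large I must make $B$ small and make the number of odd-gons small.

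The two combinatorial inputs flagged in the paragraph preceding the statement are exactly what pin these down. The result of Shannon~\cite{S79} guarantees at least $n-2$ triangles in any simple line arrangement, which lower-bounds the odd-gon correction term; and the stated bound on the number of facets on the boundary of the union of the bounded cells lower-bounds $B$. I would prove (or cite) that $B \geq n$ or the appropriate sharp value, noting that the outer boundary of the bounded region is itself a convex-like cycle of edges whose length is controlled by $n$. Combining these two bounds gives
\[
\sum_i \delta(P_i) = E_b - \tfrac{B}{2} - \tfrac{1}{2}(\#\text{odd-gons}) \leq E_b - \tfrac{B}{2} - \tfrac{n-2}{2},
\]
and substituting the exact values of $E_b$ and the minimal $B$ yields the numerator $2I - \lceil n/2 \rceil$, hence $\delta(\mathcal{A}) \leq 2 - \frac{2\lceil n/2\rceil}{(n-1)(n-2)}$.

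Finally, the matching lower bound is already supplied by the previous proposition, which computes $\delta(\mathcal{A}^o_{2,n})$ to be exactly this value, so equality holds and $\Delta_{\mathcal{A}}(2,n)$ equals the claimed expression. The main obstacle I anticipate is making the boundary count $B$ sharp: the triangle count from Shannon is a clean citation, but I must verify that the minimum number of boundary edges of the union of bounded cells is attained precisely when the number of triangles meets its lower bound, so that the two extremal conditions are compatible and simultaneously achieved by $\mathcal{A}^o_{2,n}$. Checking this compatibility — rather than merely adding two independent bounds that might not be tight at the same arrangement — is the delicate step, and I would confirm it by tracing through the incidence structure of $\mathcal{A}^o_{2,n}$ itself.
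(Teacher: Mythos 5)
Your decomposition is exactly the paper's: write $\sum_i\delta(P_i)=\sum_i\lfloor f_1(P_i)/2\rfloor$, expand the floor as $\tfrac{1}{2}\sum_i f_1(P_i)-\tfrac{1}{2}p_{odd}$, note that $\sum_i f_1(P_i)=2f_1(\mathcal{A})-f_1^0(\mathcal{A})$ with $f_1(\mathcal{A})=n(n-2)$ fixed, and then minimize $f_1^0+p_{odd}$ using Shannon's triangle bound and a lower bound on the external edges. Two gaps remain, however. First, you leave the key quantitative input unstated: the sharp bound is $f_1^0(\mathcal{A})\geq 2(n-1)$ (the paper cites this from Bremner--Deza--Xie), not ``$B\geq n$ or the appropriate sharp value''; without that number the computation cannot be completed, and your worry about whether the two extremal conditions are \emph{simultaneously} achievable is a non-issue for the upper bound (two independent lower bounds on the loss terms add regardless) --- attainment is settled separately by the explicit arrangement $\mathcal{A}^o_{2,n}$.

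Second, and more substantively, your claim that substituting the minimal values ``yields the numerator $2I-\lceil n/2\rceil$'' is false for odd $n$: plugging $f_1^0=2(n-1)$ and $p_{odd}=n-2$ into your displayed inequality gives $\sum_i\delta(P_i)\leq 2I-\tfrac{n}{2}$, which for odd $n$ is strictly larger than the target $2I-\tfrac{n+1}{2}$. You need an extra half-unit, which the paper extracts from parity: $p_{odd}\equiv\sum_i f_1(P_i)=2f_1-f_1^0\equiv f_1^0\pmod 2$, so $f_1^0+p_{odd}$ is always even, hence for odd $n$ it is at least $2(n-1)+(n-2)+1$ (equivalently, $\sum_i\delta(P_i)$ is an integer and so cannot equal the half-integer bound $2I-\tfrac{n}{2}$). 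Correspondingly, $\mathcal{A}^o_{2,n}$ attains the optimum for odd $n$ with $p_{odd}=n-1$, not $n-2$ (the central $n$-gon is then an odd cell). With the sharp external-edge bound supplied and this parity step added, your argument coincides with the paper's.
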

\begin{proof}
Let $f_1({\mathcal A})$ denote the number of bounded edges of a simple arrangement ${\mathcal A}$
of $n$ lines, and
let $f_1(P_i)$ denote the number of edges of a bounded cell $P_i$ of ${\mathcal A}$.
Let call an edge of ${\mathcal A}$ {\em external}  if it belongs to exactly one bounded cell,
and let $f_1^0({\mathcal A})$ denote the number of external edges of ${\mathcal A}$.
Let $p_{odd}({\mathcal A})$ be the number of bounded cells having an odd number of edges.
We have:
$$I \times \delta(\mathcal{A}) = \sum_{i=1}^{I}\delta(P_i)
=\sum_{i=1}^{I}\left\lfloor \frac{f_1(P_i)}{2} \right\rfloor
= \sum_{i=1}^{I}\frac{f_1(P_i)}{2}-\frac{p_{odd}({\mathcal A})}{2}
=\frac{2f_1({\mathcal A})-f_1^0({\mathcal A})-p_{odd}({\mathcal A})}{2}.
$$
Since $f_1({\mathcal A})=n(n-2)$, to maximize $\delta(\mathcal{A})$ is equivalent
to minimize $f_1^0({\mathcal A})+p_{odd}({\mathcal A})$.
We clearly have $f_1^0({\mathcal A}^{o}_{\:2,n})=2(n-1)$, and this is the best possible as
the number of external edges $f_1^0({\mathcal A})$  is at least $2(n-1)$,
see~\cite{BDX07}.
We have have $p_{odd}({\mathcal A}^{o}_{\:2,n})=n-2$ for even $n$, and this is the best
possible since at least $n-2$ bounded cells of a simple line arrangement are triangles.
If $p_{odd}({\mathcal A})$ is odd, $\sum_{i=1}^I f_1(P_i)$ is odd.
If $f_1^0({\mathcal A}^{o}_{\:2,n})=2(n-1)$,  $\sum_{i=1}^I f_1(P_i)=2f_1({\mathcal A})-f_1^0({\mathcal A})$
is even. Thus, for odd $n$, $f_1^0({\mathcal A})+p_{odd}({\mathcal A})$ is at least
$2(n-1)+(n-2)+1$ which is achieved by ${\mathcal A}^{o}_{\:2,n}$.
Thus ${\mathcal A}^{o}_{\:2,n}$ minimizes $f_1^0({\mathcal A})+p_{odd}({\mathcal A})$;
that is, maximizes $\delta(\mathcal{A})$.
\end{proof}

\section{Plane Arrangements with Large Average Diameter} \label{3d}
For $n\geq 5$, we consider the simple plane arrangement
$\mathcal{A}^o_{\:3,n}$ made of the the $3$ planes $h_1$, $h_2$ and
$h_3$ corresponding, respectively, to $x_3=0$, $x_2=0$ and $x_1=0$,
and $(n-3)$ planes defined by their intersections with the $x_1$,
$x_2$ and $x_3$ axis. We have $h_k\cap h_1\cap
h_2=\{1+2(k-4)\varepsilon,0,0\}$, $h_k\cap h_1\cap
h_3=\{0,1+(k-4)\varepsilon,0\}$ and $h_k\cap h_2\cap
h_3=\{0,0,1-(k-4)\varepsilon\}$ for $k=4,5,\dots,n-1$, and $h_n\cap
h_1\cap h_2=\{3,0,0\}$, $h_n\cap h_1\cap h_3=\{0,2,0\}$ and $h_n\cap
h_2\cap h_3=\{0,0,3+\varepsilon\}$ where $\varepsilon$ is a constant
satisfying $0<\varepsilon<1/(n-4)$. See Figure~\ref{A073} for
an illustration of an arrangement combinatorially equivalent to
$\mathcal{A}^o_{\:3,7}$ where, for clarity, only the bounded cells
belonging to the positive orthant are drawn.

\begin{figure}[htb]
\begin{center}
\epsfig{file=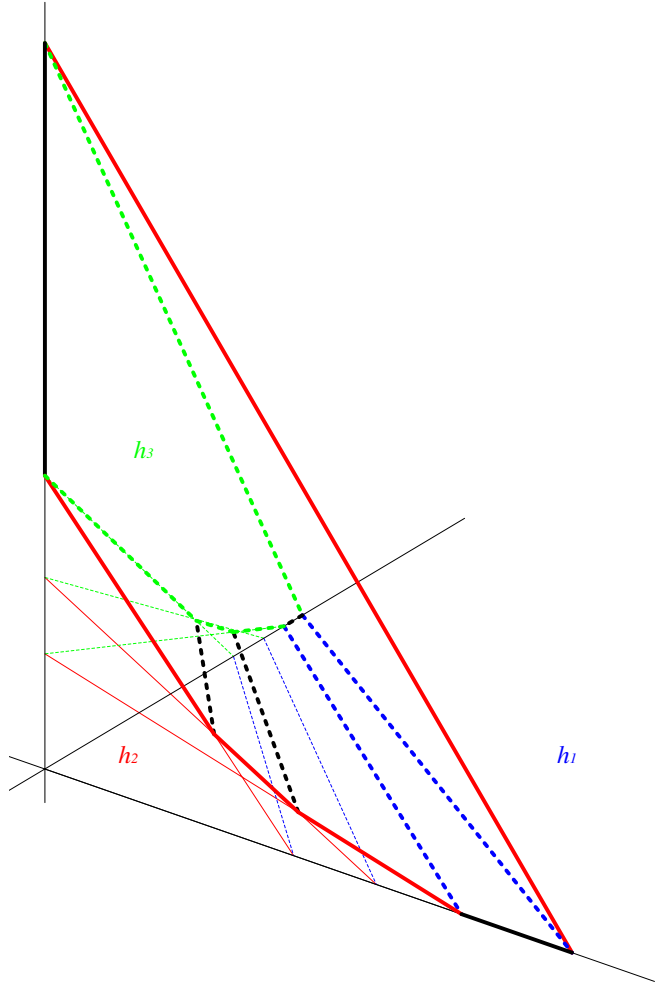, height= 13cm} \caption{An arrangement
combinatorially equivalent to $\mathcal{A}^o_{\:3,7}$} \label{A073}
\end{center}
\end{figure}

\begin{figure}[thb]
\begin{center}
\epsfig{file=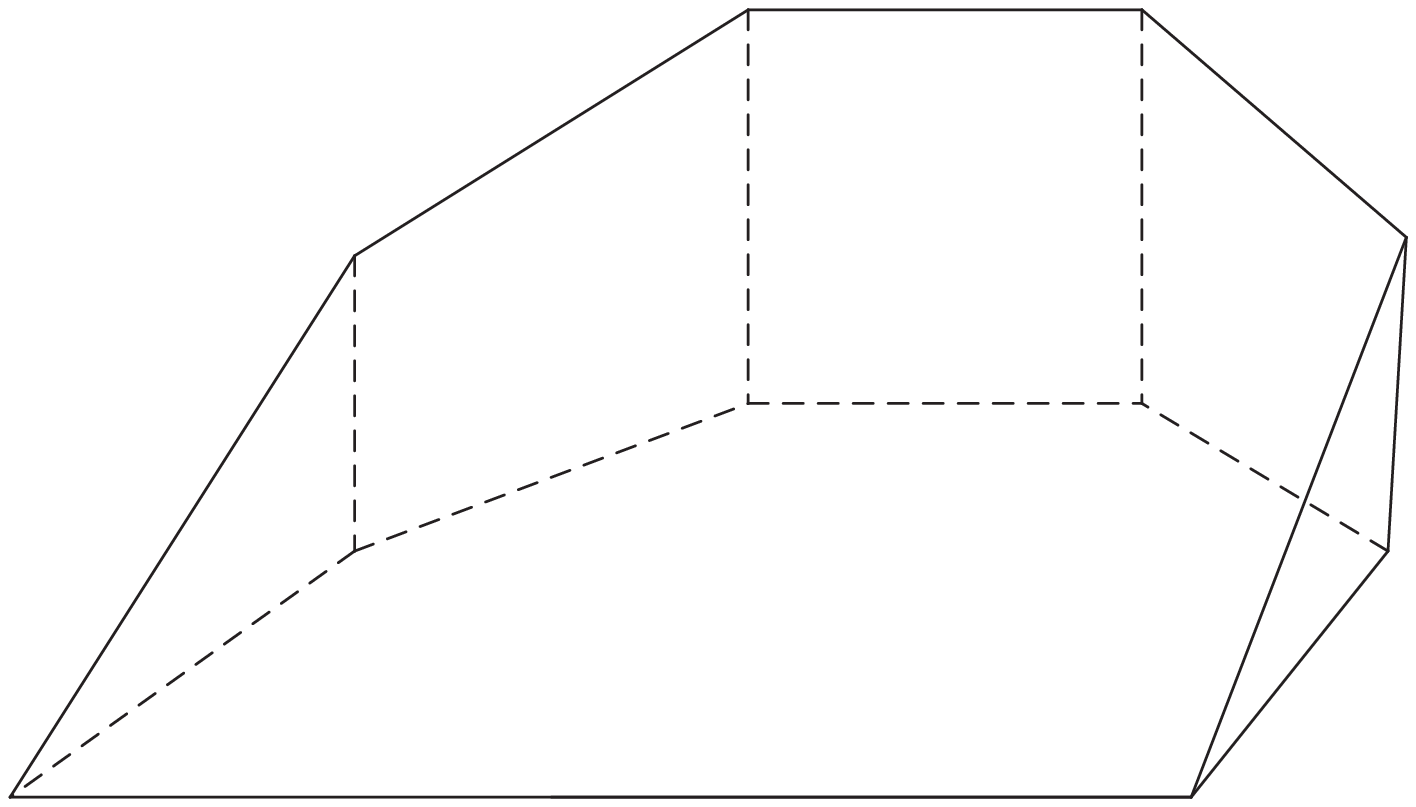, height=2.2cm} \caption{A polytope combinatorially equivalent to the shell $S_7$} \label{S72}
\end{center}
\end{figure}

\begin{prop}
For $n\geq 5$, the bounded cells of the  arrangement ${\mathcal
A}^{o}_{\:3,n}$ consist of $(n-3)$ tetrahedra, $(n-3)(n-4)-1$ cells
combinatorially equivalent to a prism with a triangular base, ${n-3
\choose 3}$ cells combinatorially equivalent to a cube, and $1$ cell
combinatorially equivalent to a shell $S_n$ with $n$ facets and
$2(n-2)$ vertices. See Figure~\ref{S72} for an illustration of
$S_7$. We have $\delta({\mathcal
A}^{o}_{\:3,n})=3-\frac{6}{n-1}+\frac{6(\lfloor\frac{n}{2}\rfloor-2)}{(n-1)(n-2)(n-3)}$
for $n\geq 5$.
\end{prop}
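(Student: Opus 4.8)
The plan is to follow the two-dimensional template: split ${\mathcal A}^{o}_{\:3,n}$ into the cyclic arrangement ${\mathcal A}^{*}_{\:3,n-1}$ formed by its first $n-1$ planes together with the capping plane $h_n$, classify every bounded cell by combinatorial type, attach to each type its diameter, and then average. Where the planar proof rested on the fact that a $k$-gon has diameter $\lfloor k/2\rfloor$, the engine here is that a tetrahedron, a triangular prism, and a combinatorial cube have diameters $1$, $2$, and $3$ (each immediate from its $1$-skeleton), while the shell $S_n$ has diameter $\lfloor n/2\rfloor$. Granting the cell multiplicities $(n-3)$, $(n-3)(n-4)-1$, ${n-3 \choose 3}$, $1$ and these four diameters, the asserted value of $\delta$ follows by dividing
$$(n-3)+2\left((n-3)(n-4)-1\right)+3{n-3\choose 3}+\left\lfloor \frac{n}{2}\right\rfloor$$
by $I={n-1 \choose 3}$ and simplifying to $3-\frac{6}{n-1}+\frac{6(\lfloor n/2\rfloor-2)}{(n-1)(n-2)(n-3)}$.

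For the cell census I would start from the bounded cells of ${\mathcal A}^{*}_{\:3,n-1}$ as determined in Section~\ref{dd}, namely $(n-4)$ tetrahedra, $(n-4)(n-5)$ triangular prisms, and ${n-4 \choose 3}$ cubes, for a total of ${n-2 \choose 3}$. The coordinates chosen for $h_n$ place every vertex of ${\mathcal A}^{*}_{\:3,n-1}$ strictly on one side of $h_n$, so by convexity no old cell is severed and all ${n-2 \choose 3}$ of them survive in ${\mathcal A}^{o}_{\:3,n}$. The remaining $\,{n-1 \choose 3}-{n-2 \choose 3}={n-2 \choose 2}\,$ bounded cells are exactly the formerly unbounded regions that $h_n$ now caps, and each meets $h_n$ in a single facet. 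Since the other $n-1$ planes cut $h_n$ in a simple arrangement of $n-1$ lines, whose bounded $2$-cells also number ${n-2 \choose 2}$, capping induces a bijection between the new $3$-cells and these planar $2$-cells. Tracking which outer facets of ${\mathcal A}^{*}_{\:3,n-1}$ each cap is glued to --- aided by the Pascal identity ${n-4\choose 3}+{n-4\choose 2}={n-3\choose 3}$ --- identifies the additions as one tetrahedron, $(2n-9)$ prisms, ${n-4 \choose 2}$ cubes, and one further cell, the shell $S_n$; combined with the survivors this yields the stated totals.

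The crux is the shell itself, the unique bounded cell of ${\mathcal A}^{o}_{\:3,n}$ meeting all $n$ planes --- the spatial analogue of the planar $n$-gon, which meets all $n$ lines. Euler's relation forces a simple $3$-polytope with $n$ facets and $2(n-2)$ vertices to have $3(n-2)$ edges; for $n=5$ and $n=6$ this degenerates to a prism and a cube, consistent with diameters $2$ and $3$. In general I would read the facet-vertex incidences of $S_n$ off the way $h_n$ seals the outer shell of ${\mathcal A}^{*}_{\:3,n-1}$, exhibit two vertices whose separating facets force any connecting edge-path to pass through about half of the $n$ facets, and match this lower bound with an explicit path of that length, establishing $\delta(S_n)=\lfloor n/2\rfloor$. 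This diameter computation is the one genuinely non-routine step; with it in hand, the diameters $1,2,3$ being immediate, the concluding simplification of the averaged sum is only elementary algebra.
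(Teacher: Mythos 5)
Your proposal follows essentially the same route as the paper: decompose ${\mathcal A}^{o}_{\:3,n}$ into the cyclic arrangement ${\mathcal A}^{*}_{\:3,n-1}$ (with $(n-4)$ tetrahedra, $(n-4)(n-5)$ prisms, ${n-4 \choose 3}$ cubes) plus the capping plane $h_n$, identify the new cells as $1$ tetrahedron, $(2n-9)$ prisms, ${n-4 \choose 2}$ cubes and the shell $S_n$, and average the diameters $1,2,3,\lfloor n/2\rfloor$ --- and all your counts and the final algebra agree with the paper's. The two steps you leave as sketches (the type census of the ${n-2 \choose 2}$ newly capped cells, which the paper settles by locating them slab by slab between consecutive planes below $h_1$, and the diameter $\lfloor n/2\rfloor$ of $S_n$, which the paper simply asserts) are treated at a comparable level of detail in the original.
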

\begin{proof}
For $4\leq k\leq n-1$, let ${\mathcal A}^*_{\:3,k}$ denote the
arrangement formed by the first $k$ planes of ${\mathcal
A}^{o}_{\:3,n}$. See Figure~\ref{A63} for an arrangement
combinatorially equivalent to ${\mathcal A}^*_{3,6}$. We first show
by induction that the bounded cells of the arrangement ${\mathcal
A}^*_{\:3,n-1}$ consist of $(n-4)$ tetrahedra, $(n-4)(n-5)$
combinatorial triangular prisms and ${n-4 \choose 3}$ combinatorial
cubes. We use the following notation to describe the bounded cells
of ${\mathcal A}^*_{\:3,k-1}$: $T_{\triangle}$ 
for a tetrahedron with a facet on $h_1$
; $P_{\triangle}$, respectively $P_{\diamond}$, for a
combinatorial triangular prism with a triangular, respectively
square, facet on $h_1$;
$C_{\diamond}$ for a combinatorial cube with a square facet on $h_1$;
and  $C$, respectively $T$ and $P$, for a combinatorial cube,
respectively tetrahedron and triangular prism, not touching $h_1$.
When the plane $h_k$ is added, the cells $T_{\triangle}$,
$P_{\triangle}$, $P_{\diamond}$, and $C_{\diamond}$ are sliced,
respectively, into $T$ and $P_{\triangle}$, $P$ and
$P_{\triangle}$, $P$ and $C_{\diamond}$, and $C$ and $C_{\diamond}$. In
addition, one $T_{\triangle}$ cell and $(k -4)$ $P_{\diamond}$
cells are created by bounding 
$(k-3)$ unbounded cells of ${\mathcal A}^*_{\:3,k-1}$. Let $c(k)$
denotes the number of $C$ cells of ${\mathcal A}^*_{\:3,k}$,
similarly for $C_{\diamond}$, $T$, $T_{\triangle}$, $P$,
$P_{\triangle}$ and $P_{\diamond}$. For ${\mathcal
A}^*_{\:3,4}$ we have $t_{\triangle}(4)=1$ and $t(4)=p(4)=
p_{\triangle}(4)=p_{\diamond}(4)=c(4)=c(4)=0$. The
addition of $h_k$ removes and adds one $T_{\triangle}$, thus,
$t_{\triangle}(k) =1$. Similarly, all $P_{\diamond}$ are removed and
$(k-4)$ are added, thus, $p_{\diamond}(k)=(k-4)$. Since
$t(k)=t(k-1)+t_{\triangle}(k-1)$ and $p_{\triangle}(k)=
p_{\triangle}(k-1)+t_{\triangle}(k-1)$, we have $t(k)=
p_{\triangle}(k)=(k-4)$. Since $p(k)=p(k-1)+
p_{\triangle}(k-1)+p_{\diamond}(k-1)$, we have
$p(k)=(k-4)(k-5)$. Since $c_{\diamond}(k)=
c_{\diamond}(k-1)+p_{\diamond}(k-1)$, we have $c_{\diamond}(k)={k-4 \choose
2}$. Since $c(k)=c(k-1)+c_{\diamond}(k-1)$, we have $c(k)={k-4 \choose
3}$. Therefore the bounded cells of ${\mathcal A}^*_{\:3,n-1}$
consist of $t(n-1)+t_{\triangle}(n-1)=(n-4)$ tetrahedra, $p(n-1)+
p_{\triangle}(n-1)+p_{\diamond}(n-1)=(n-4)(n-5)$
combinatorial triangular prisms, and $c(n-1)+c_{\diamond}(n-1)={n-4
\choose 3}$ combinatorial cubes. The addition of $h_n$ to ${\mathcal
A}^*_{\:3,n-1}$ creates $1$ shell $S_n$ with $2$ triangular facets
belonging to $h_2$ and $h_3$ and $1$ square facet belonging to
$h_1$.
Besides $S_n$, all the bounded cells created by the addition of
$h_n$ are below $h_1$.
One $P_{\diamond}$ and $n-5$ combinatorial cubes are created between
$h_2$ and $h_3$. The other bounded cells are on the negative side of $h_3$:
$n-5$ $P_{\diamond}$ and $1$ $T_{\triangle}$ between $h_n$ and $h_{n-1}$,
and
$n-k-5$ $C_{\diamond}$ and $1$ $P_{\triangle}$ between $h_{n-k}$
and $h_{n-k-1}$ for $k=1,\dots,n-5$.
In total, we have $1$ tetrahedron, ${n-4 \choose 2}$ combinatorial cubes
and $(2n-9)$ combinatorial triangular prisms below $h_1$.
Since the diameter of a tetrahedron, triangular prism, cube and
$n$-shell is, respectively, $1,2,3$ and $\lfloor\frac{n}{2}\rfloor$,
we have $\delta({\mathcal
A}^{o}_{\:3,n})=3-6\frac{2(n-3)+(n-3)(n-4)-1-(\lfloor\frac{n}{2}\rfloor-3)}{(n-1)(n-2)(n-3)}
=3-\frac{6}{n-1}+\frac{6(\lfloor\frac{n}{2}\rfloor-2)}{(n-1)(n-2)(n-3)}$.
\end{proof}

\begin{figure}[htb]
\begin{center}
\epsfig{file=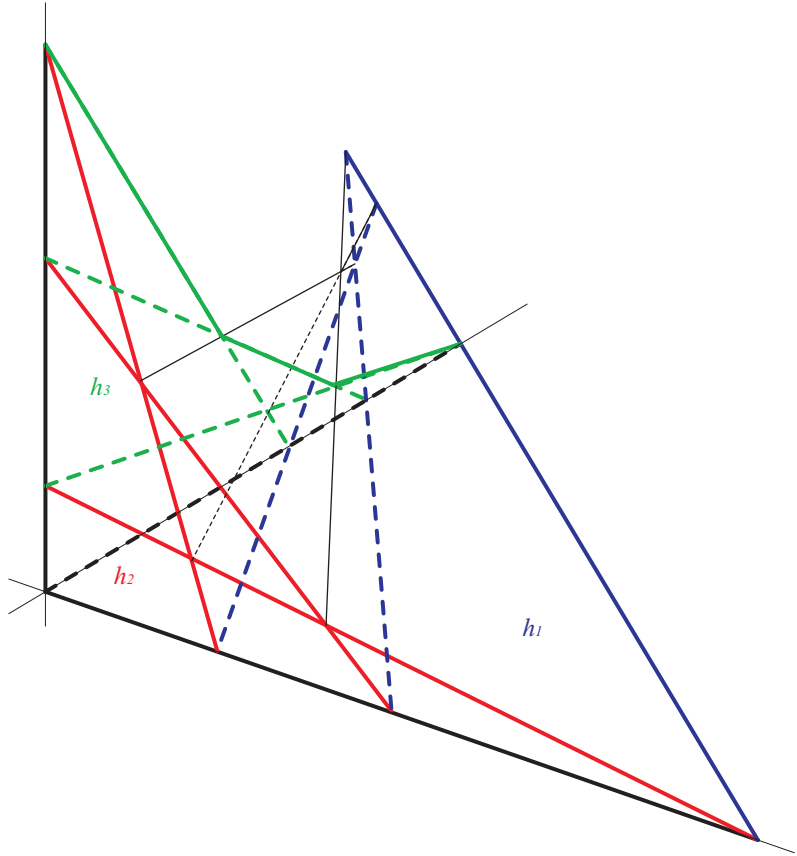, height= 11.3cm} \caption{An arrangement
combinatorially equivalent to $\mathcal{A}^*_{\:3,6}$} \label{A63}
\end{center}
\end{figure}

\begin{remark} There is only one combinatorial type of simple arrangement of $5$ planes,
and  we have $\Delta_{\mathcal{A}}(3,5)=\delta({\mathcal
A}^{o}_{\:3,5})=\frac{3}{2}$. Among the $43$ simple combinatorial types of
arrangements formed by $6$ planes~\cite{om}, the maximum average diameter is $2$
while  $\delta({\mathcal A}^{o}_{\:3,6})=1.8$. See Figure~\ref{A63_29}
for an illustration of the combinatorial type of one of the two
simple arrangements with $6$ planes maximizing the average
diameter. The far away vertex on the right and $3$ bounded edges incident to it
are cut off  (same for the far away vertex on the left) so the $10$ bounded cells
of the arrangement ($3$ tetrahedra, $4$ simplex prisms, and $3$ 6-shells)
 appear not too small.
\end{remark}

\begin{figure}[htb]
\begin{center}
\epsfig{file=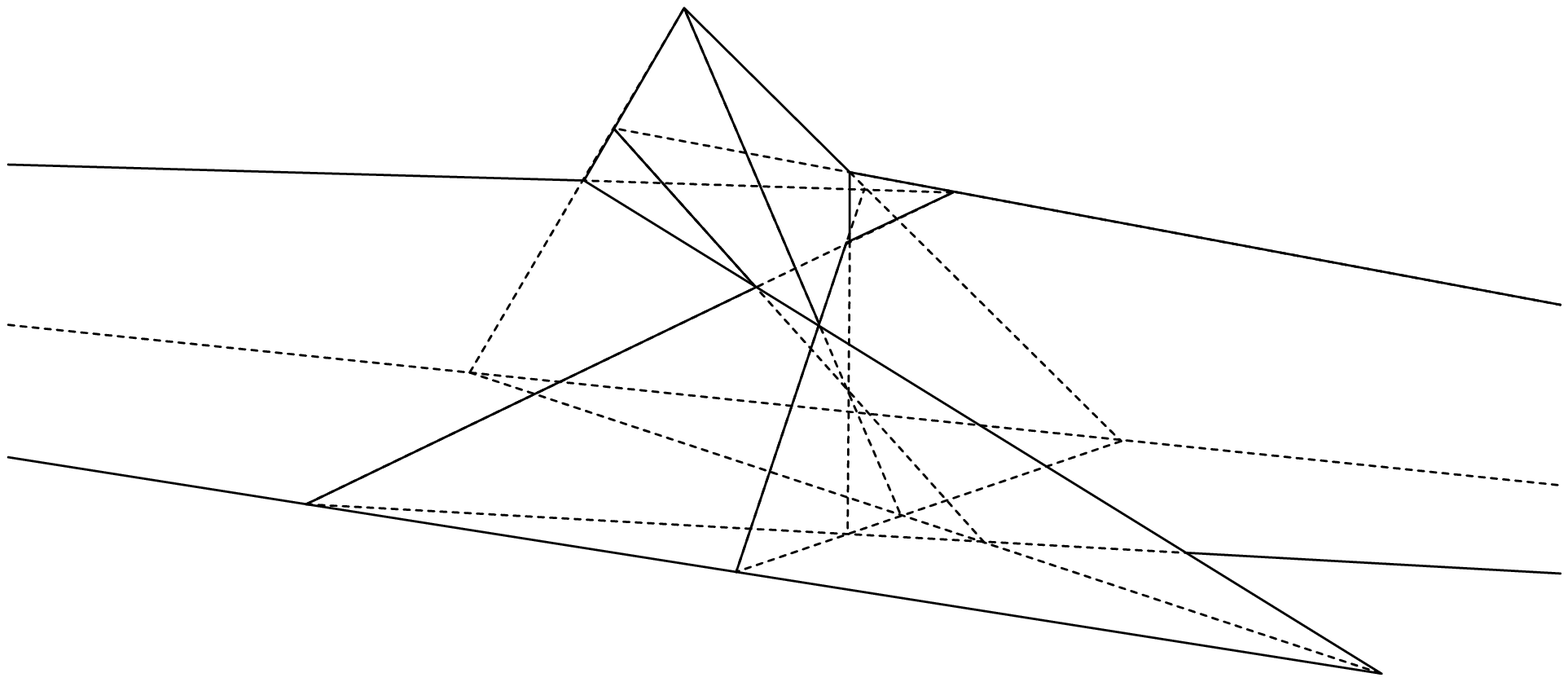, height= 6.3cm} \caption{An arrangement
formed by 6 planes maximizing the average diameter} \label{A63_29}
\end{center}
\end{figure}

\begin{prop}
For $n\geq 4$, the largest possible average diameter of a bounded cell of a simple
arrangement of $n$ planes satisfies
$3-\frac{6}{n-1}+\frac{6(\lfloor\frac{n}{2}\rfloor-2)}{(n-1)(n-2)(n-3)}\leq
\Delta_{{\mathcal A}}(3,n)\leq 3 + \frac{4(2n^2-16n+21)}{3(n-1)(n-2)(n-3)}$.
\end{prop}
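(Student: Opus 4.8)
The lower bound needs no new work: ${\mathcal A}^{o}_{\:3,n}$ is itself a simple arrangement of $n$ planes, so $\Delta_{{\mathcal A}}(3,n)\geq\delta({\mathcal A}^{o}_{\:3,n})$, and the value $\delta({\mathcal A}^{o}_{\:3,n})=3-\frac{6}{n-1}+\frac{6(\lfloor\frac{n}{2}\rfloor-2)}{(n-1)(n-2)(n-3)}$ was just computed in the preceding proposition. I would therefore concentrate on the upper bound, proving it by transporting the planar argument of Section~\ref{2d} one dimension up, with $2$-faces playing the role of edges.

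Starting from $I\,\delta({\mathcal A})=\sum_{i=1}^{I}\delta(P_i)$, where $I={n-1 \choose 3}$ and each bounded cell $P_i$ is a simple $3$-polytope, the first ingredient is a per-cell diameter bound in terms of the number of facets $f_2(P_i)$. The three-dimensional counterpart of the exact planar identity $\delta(P_i)=\lfloor f_1(P_i)/2\rfloor$ is the estimate $\delta(P_i)\leq\lfloor\frac{2}{3}f_2(P_i)\rfloor-1$, which is sharper than the Hirsch bound $f_2(P_i)-3$ and is attained with equality by the tetrahedra, prisms and cubes occurring in ${\mathcal A}^{o}_{\:3,n}$. The second ingredient is a double count of $2$-faces: every bounded $2$-face of the arrangement is a facet of at most two bounded cells, so $\sum_{i}f_2(P_i)=2f_2({\mathcal A})-f_2^0({\mathcal A})$, where $f_2({\mathcal A})={n \choose 1}{n-2 \choose 2}=\frac{n(n-2)(n-3)}{2}$ is the number of bounded $2$-faces and $f_2^0({\mathcal A})$ is the number of external ones (those lying in exactly one bounded cell). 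Combining the two ingredients and using $\frac{2f_2({\mathcal A})}{I}=\frac{6n}{n-1}$ gives $\delta({\mathcal A})\leq 3+\frac{4}{n-1}-\frac{4\,f_2^0({\mathcal A})}{(n-1)(n-2)(n-3)}$; the leading part $3+\frac{4}{n-1}$ reproduces the bound recorded in the introduction, and the last term is exactly where the sharpening will come from.

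It then remains to bound $f_2^0({\mathcal A})$ from below, uniformly over all simple arrangements of $n$ planes. This is the three-dimensional analogue of the planar estimate $f_1^0({\mathcal A})\geq 2(n-1)$ used in Section~\ref{2d}, i.e. a lower bound on the number of facets on the boundary of the union of the bounded cells, which I would take from~\cite{BDX07}. Substituting that lower bound for $f_2^0({\mathcal A})$ into the displayed inequality and simplifying yields the claimed expression $3+\frac{4(2n^2-16n+21)}{3(n-1)(n-2)(n-3)}$.

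The main obstacle is the per-cell diameter bound: the naive combination of the Hirsch estimate with ``each facet lies in at most two cells'' only gives $3+\frac{6}{n-1}$, so the factor $\frac{2}{3}$ rather than $1$ in the diameter bound is essential, and it must be justified for every simple $3$-polytope that can arise as a cell, not merely for the well-behaved cells of ${\mathcal A}^{o}_{\:3,n}$. The secondary difficulty is the uniform lower bound on $f_2^0({\mathcal A})$, a genuinely arrangement-theoretic statement about the boundary of the bounded complex; and because the diameter bound carries a floor, some care with the lower-order terms is needed to land exactly on the stated rational function rather than a slightly weaker one.
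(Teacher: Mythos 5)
Your strategy coincides with the paper's: the lower bound is read off from $\delta({\mathcal A}^{o}_{\:3,n})$, and the upper bound combines the per-cell estimate $\delta(P_i)\leq\lfloor 2f_2(P_i)/3\rfloor-1$, the double count $\sum_i f_2(P_i)=2f_2({\mathcal A})-f_2^0({\mathcal A})$ with $f_2({\mathcal A})=n{n-2\choose 2}$, and the external-facet bound $f_2^0({\mathcal A})\geq\frac{n(n-2)}{3}+2$ from~\cite{BDX07}. All of that is exactly what the paper does.

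There is, however, one genuine missing ingredient, hiding in the step you defer as ``some care with the lower-order terms.'' If you discard the floor via $\lfloor 2f_2(P_i)/3\rfloor\leq 2f_2(P_i)/3$, then your displayed inequality $\delta({\mathcal A})\leq 3+\frac{4}{n-1}-\frac{4f_2^0({\mathcal A})}{(n-1)(n-2)(n-3)}$ together with the bound on $f_2^0$ gives $3+\frac{4(2n^2-13n+12)}{3(n-1)(n-2)(n-3)}$, which is strictly weaker than the stated $3+\frac{4(2n^2-16n+21)}{3(n-1)(n-2)(n-3)}$ for every $n\geq 4$; no amount of algebraic care recovers the difference. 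The paper closes this gap with a second arrangement-theoretic input, Shannon's theorem~\cite{S79}: at least $n-3$ bounded cells of a simple arrangement of $n$ planes are tetrahedra, and for a tetrahedron $\frac{2\cdot 4}{3}-\lfloor\frac{8}{3}\rfloor=\frac{2}{3}$, so one may subtract an additional $\frac{2(n-3)}{3}$ from $\sum_i\frac{2f_2(P_i)}{3}$ before dropping the floors. That extra $\frac{2(n-3)}{3}$, i.e.\ $\frac{12(n-3)}{3(n-1)(n-2)(n-3)}$ after dividing by $I$, is precisely what turns $2n^2-13n+12$ into $2n^2-16n+21$. (Note in passing that the paper's own displayed intermediate inequality subtracts only $\frac{n-3}{3}$, which would yield $8n^2-58n+66$ in the numerator; the stated final constant requires the full $\frac{2}{3}$ per simplex.) So the missing idea is not bookkeeping but the simplicial-cell count, a lower bound of the same nature as, and used alongside, the external-facet bound you do cite.
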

\begin{proof}
Let $f_2({\mathcal A})$ denote the number of bounded facets of a simple arrangement ${\mathcal A}$
of $n$ planes, and let $f_2(P_i)$ denote the number of facets of a bounded cell $P_i$ of ${\mathcal A}$.
Let call a facet of ${\mathcal A}$ {\em external}  if it belongs to exactly one bounded cell,
and let $f_2^0({\mathcal A})$ denote the number of external facets of ${\mathcal A}$.
We have: $I\times \delta(\mathcal{A})=$
$$
\sum_{i=1}^{I} \delta(P_i)
 \leq\sum_{i=1}^{I} \left( \left\lfloor \frac{2f_2(P_i)}{3}\right\rfloor  - 1 \right)
\leq \sum_{i=1}^{I}  \frac{2f_2(P_i)}{3} -\frac{n-3}{3}-I
= \frac{4f_2({\mathcal A})-2f_2^0({\mathcal A})-n+3-3I}{3}$$
where the second inequality holds since at least $(n-3)$ bounded cells of ${\mathcal A}$
are simplices~\cite{S79}. Since $f_2({\mathcal A})=n{n-2 \choose 2}$ and $ f_2^0({\mathcal A})$ is at least
$\frac{n(n-2)}{3}+2$,
see~\cite{BDX07}, we have $\delta({\mathcal A})\leq 3 + 4(2n^2-16n+21)/3(n-1)(n-2)(n-3)$.
\end{proof}

\section{Hyperplane Arrangements with Large Average Diameter} \label{dd}
After recalling in Section~\ref{d+2} the unique
combinatorial structure of a simple arrangement formed by $d+2$
hyperplanes in dimension $d$, we show in Section~\ref{sdd}
 that the cyclic hyperplane
arrangement ${\mathcal A}^*_{\:d,n}$ contains ${n-d \choose
d}$ cubical cells for $n\geq 2d$. It implies that the average
diameter $\delta({\mathcal A}^*_{\:d,n})$ is arbitrarily close to
$d$ for $n$ large enough. Thus, the dimension $d$ is an asymptotic
lower bound for $\Delta_{\cal A}(d,n)$ for fixed $d$.

\begin{figure}[hbt]
\begin{center}
\epsfig{file=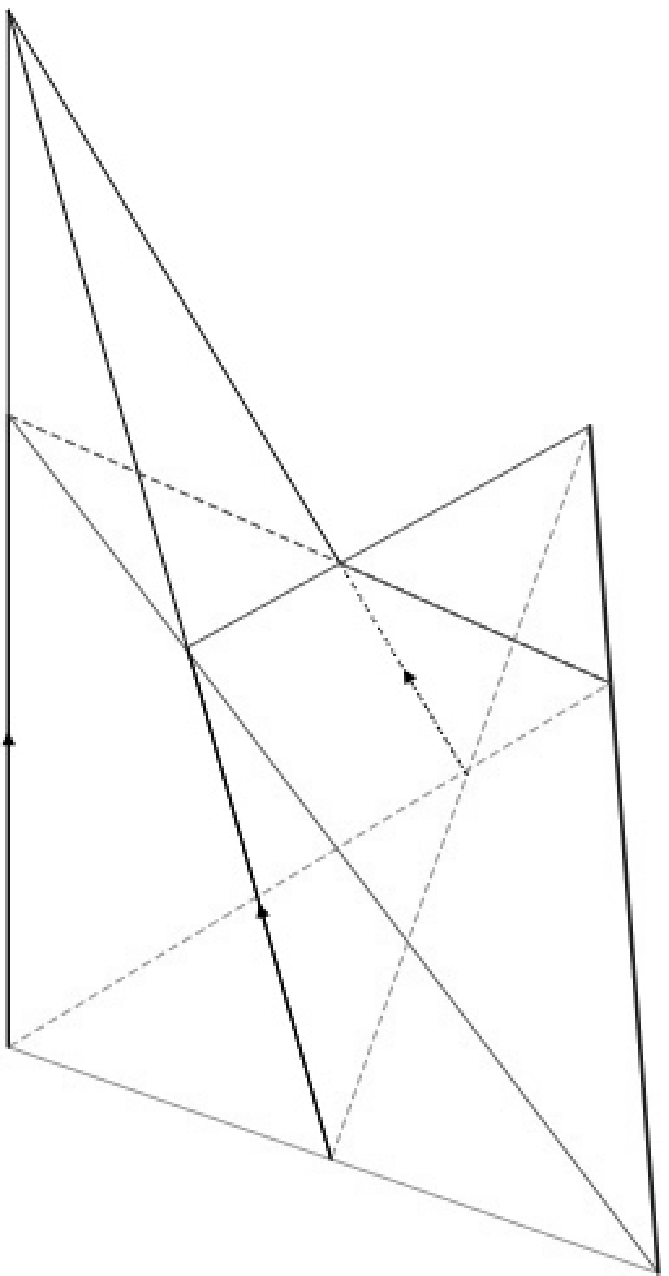, height=6.2cm} \caption{An arrangement
combinatorially equivalent to $\mathcal{A}_{\:3,5}$} \label{A53DR}
\end{center}
\end{figure}

\subsection{The average diameter of a simple arrangement with $d+2$ hyperplanes} \label{d+2}
Let ${\mathcal A}_{\:d,d+2}$ be a simple arrangement formed by $d+2$
hyperplanes in dimension $d$. Besides simplices, the bounded cells
of ${\mathcal A}_{\:d,d+2}$ are simple polytopes with $d+2$ facets
corresponding to the product of a $k$-simplex with a $(d-k)$-simplex for
$k=1,\dots,\lfloor\frac{d}{2}\rfloor$, see for example~\cite{G03}.
We recall one way to show that the combinatorial type of the arrangement of
$d+2$ hyperplanes in dimension $d$ is unique.
The affine Gale dual, see~\cite[Chapter 6]{Z93}, of the
$d+3$ vectors in dimension $d+1$ corresponding to the
linear arrangement associated to ${\mathcal A}_{\:d,d+2}$
(and the hyperplane at infinity)
forms a configuration of $d+3$ distinct signed points on a line; i.e.,
is unique up to relabeling and reorientation.
We also recall the combinatorial structure of
${\mathcal A}_{\:d,d+2}$ as some of the notions presented are used
in Section~\ref{sdd}. Since there is only one combinatorial type of
simple arrangement with $d+2$ hyperplanes, the arrangement
${\mathcal A}_{\:d,d+2}$ can be obtained from the simplex ${\mathcal
A}_{\:d,d+1}$ by cutting off one its vertices $v$ with the
hyperplane $h_{d+2}$. As a result, a prism $P$ with a simplex base is created.
Let us call {\em top base} the base of $P$ which belongs to
$h_{d+2}$ and assume, without loss of generality, that the
hyperplane containing the bottom base of $P$ is $h_{d+1}$. Besides
the simplex defined by $v$ and the vertices of the top base of $P$,
the remaining $d$ bounded cells of ${\mathcal A}_{\:d,d+2}$ are
between $h_{d+2}$ and $h_{d+1}$. See Figure~\ref{A53DR} for an
illustration the combinatorial structure of ${\mathcal A}_{\:3,5}$.
As the projection of ${\mathcal A}_{\:d,d+2}$ on $h_{d+1}$ is
combinatorially equivalent to ${\mathcal A}_{\:d-1,d+1}$, the $d$
bounded cells between $h_{d+2}$ and $h_{d+1}$ can be obtained from
the $d$ bounded cells of ${\mathcal A}_{\:d-1,d+1}$ by the {\em
shell-lifting} of ${\mathcal A}_{\:d-1,d+1}$ over the ridge
$h_{d+1}\cap h_{d+2}$; that is, besides the vertices belonging to
$h_{d+1}\cap h_{d+2}$, all the vertices in $h_{d+1}$ (forming
${\mathcal A}_{\:d-1,d+1}$) are lifted. See Figure~\ref{ADR} where
the skeletons of the $d+1$ bounded cells of ${\mathcal A}_{\:d,d+2}$
are given for $d=2,3,\dots,6$, and the shell-lifting of the bounded
cells is indicated by an arrow. The vertices not belonging to
$h_{d+1}$ are represented in black in Figure~\ref{ADR}, e.g., the
simplex cell containing $v$ is the one made of black vertices. The
bounded cells of ${\mathcal A}_{\:d,d+2}$ are $2$ simplices and a
pair of product of a $k$-simplex with a $(d-k)$-simplex for
$k=1,\dots,\lfloor\frac{d}{2}\rfloor$ for odd $d$. For even $d$ the product
of the $\frac{d}{2}$-simplex with itself is present only once. Since all the
bounded cells, besides the 2 simplices, have diameter $2$, we have
$\delta({\mathcal A}_{\:d,d+2})=\frac{2+2(d-1)}{d+1}$.
\begin{prop}
We have
$\Delta_{\mathcal{A}}(d,d+2)=\delta({\mathcal
A}_{\:d,d+2})=\frac{2d}{d+1}$.
\end{prop}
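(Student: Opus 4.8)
The plan is to exploit the uniqueness of the combinatorial type of a simple arrangement of $d+2$ hyperplanes in dimension $d$, which was recalled above via the affine Gale dual argument. The starting observation is that the average diameter $\delta(\mathcal{A})$ is a purely combinatorial invariant of the arrangement: the diameter $\delta(P_i)$ of each bounded cell depends only on the graph of $P_i$, and the number of cells of each combinatorial type is determined by the combinatorial type of $\mathcal{A}$. Hence any two combinatorially equivalent simple arrangements have identical average diameter.

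Since every simple arrangement of $d+2$ hyperplanes in dimension $d$ is combinatorially equivalent to ${\mathcal A}_{\:d,d+2}$, the maximum defining $\Delta_{\mathcal{A}}(d,d+2)$ is taken over a single combinatorial type. Therefore $\Delta_{\mathcal{A}}(d,d+2)=\delta({\mathcal A}_{\:d,d+2})$ with no optimization to carry out; the entire content of the first equality in the statement is this invariance remark combined with the uniqueness already established.

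It then remains to evaluate $\delta({\mathcal A}_{\:d,d+2})$. The number of bounded cells is $I={d+1 \choose d}=d+1$. By the cell structure recalled above, these consist of $2$ simplices together with $d-1$ cells, each combinatorially equivalent to a product of a $k$-simplex with a $(d-k)$-simplex for some $k$ with $1\leq k\leq d-1$. A simplex has diameter $1$, and the diameter of a product $P\times Q$ equals $\delta(P)+\delta(Q)$, so each product cell has diameter $1+1=2$. Summing over all cells gives $\sum_{i=1}^{I}\delta(P_i)=2\cdot 1+(d-1)\cdot 2=2d$, whence
$$\delta({\mathcal A}_{\:d,d+2})=\frac{2d}{d+1}.$$

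I expect no genuine obstacle: the substantive work — the uniqueness of the combinatorial type and the enumeration of the bounded cells — has already been settled before the statement, so the proof reduces to the invariance observation plus elementary arithmetic. The only point deserving a line of care is the claim that every non-simplex cell has diameter exactly $2$, which follows because the graph of a product polytope is the Cartesian product of the two factor graphs, and the diameter of a Cartesian product of graphs is the sum of the factor diameters.
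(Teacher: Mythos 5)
Your proposal is correct and follows essentially the same route as the paper: the paper likewise derives the result from the uniqueness of the combinatorial type of ${\mathcal A}_{\:d,d+2}$ (via the affine Gale dual) together with the enumeration of its bounded cells as $2$ simplices and $d-1$ products of simplices, each of diameter $2$, giving $\delta({\mathcal A}_{\:d,d+2})=\frac{2+2(d-1)}{d+1}=\frac{2d}{d+1}$. Your explicit remark that the diameter of a product polytope is the sum of the factor diameters is a welcome clarification of a step the paper leaves implicit.
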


\begin{figure}[hbt]
\begin{center}
\epsfig{file=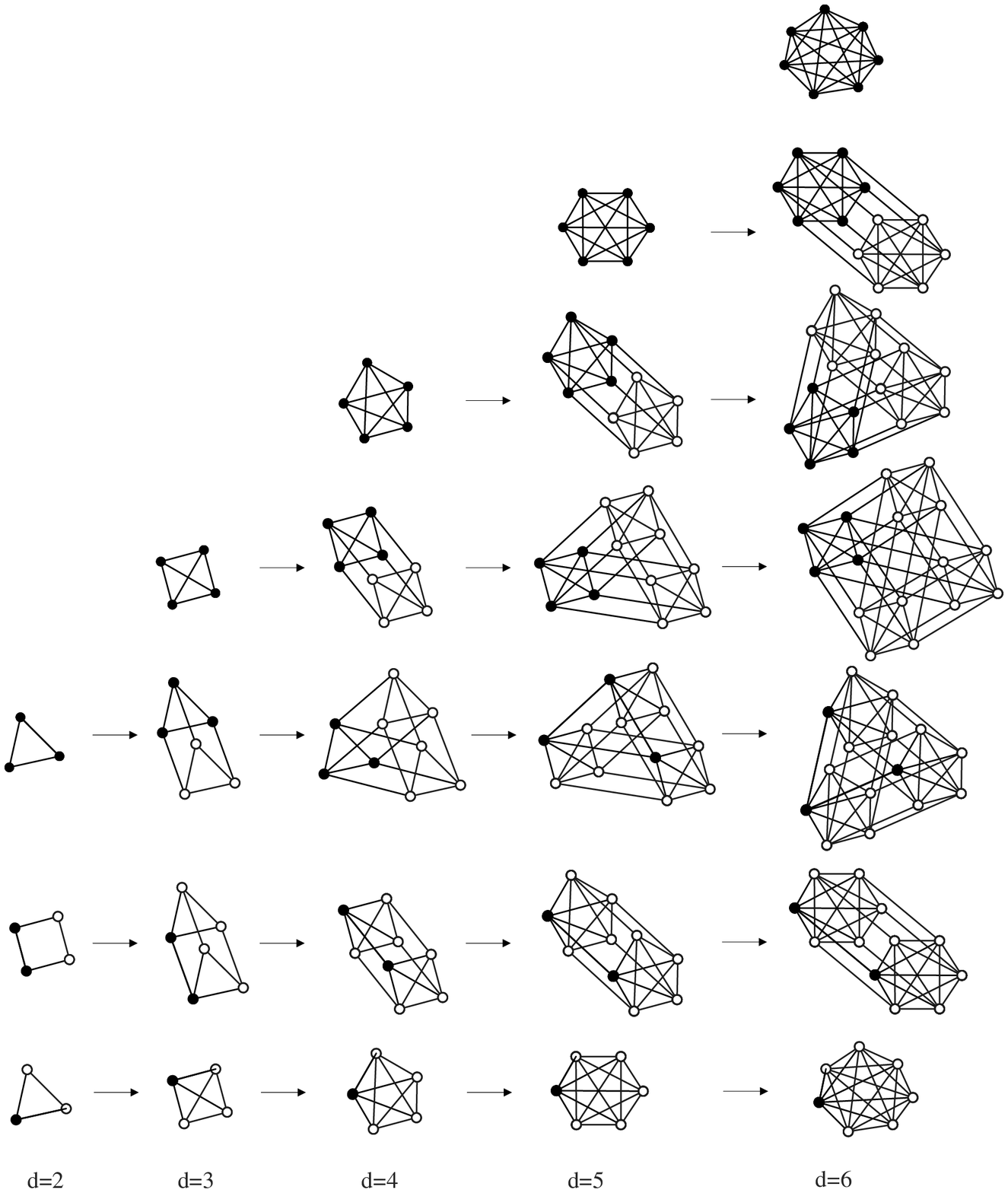, height=13.5cm} \caption{The skeletons of the
$d+1$ bounded cells of $\mathcal{A}_{\:d,d+2}$ for $d=2,3,\dots,6$.}
\label{ADR}
\end{center}
\end{figure}

\subsection{Hyperplane Arrangements with Large Average Diameter}\label{sdd}
We consider the simple hyperplane arrangement ${\mathcal A}^*_{\:d,n}$
combinatorially equivalent to the cyclic hyperplane and formed by the following $n$
hyperplanes $h^d_k$ for $k=1,2,\dots,n$. The hyperplanes
$h^d_k=\{x:x_{d+1-k}=0\}$ for $k=1,2,\dots,d$ form the positive
orthant, and the hyperplanes $h^d_k$ for $k=d+1,\dots,n$ are defined
by their intersections with the axes ${\bar x}_i$ of the positive
orthant. We have $h_k^d\cap {\bar
x}_i=\{0,\dots,0,1+(d-i)(k-d-1)\varepsilon,0,\dots,0\}$ for
$i=1,2\dots,d-1$ and $h_k^d\cap {\bar
x}_d=\{0,\dots,0,1-(k-d-1)\varepsilon\}$ where $\varepsilon$ is a
constant satisfying $0<\varepsilon<1/(n-d-1)$. The
combinatorial structure of $\mathcal{A}^*_{\:d,n}$ can be derived
inductively. All the bounded cells of $\mathcal{A}^*_{\:d,n}$ are on
the positive side of $h^d_1$ and $h^d_2$ with the bounded cells
between $h^d_2$ and $h^d_3$ being obtained by the shell-lifting of a
combinatorial equivalent of $\mathcal{A}^*_{\:d-1,n-1}$ over the
ridge $h^d_2 \cap h^d_3$, and the bounded cells on the other side of
$h^d_3$ forming a combinatorial equivalent of
$\mathcal{A}^*_{\:d,n-1}$. The intersection ${\mathcal A}^*_{\:d,n}
\cap h^d_k$  is combinatorially equivalent to ${\mathcal
A}^*_{\:d-1,n-1}$ for $k=2,3,\dots,d$ and removing $h^d_2$ from
${\mathcal A}^*_{\:d,n}$ yields an arrangement combinatorially
equivalent to ${\mathcal A}^*_{\:d,n-1}$. See Figure~\ref{A63} for
an arrangement combinatorially equivalent to
$\mathcal{A}^*_{\:3,6}$.
\begin{prop}\label{assymp}
The arrangement ${\mathcal A}^*_{\:d,n}$ contains $n-d \choose d$
cubical cells for $n\geq 2d$. We have $\delta({\mathcal A}^*_{\:d,n})\geq d{n-d \choose
d}/{n-1 \choose d}$ for $n\geq 2d$. It implies that for $d$ fixed,
$\Delta_{\cal A}(d,n)$ is arbitrarily close to $d$ for $n$ large
enough.
\end{prop}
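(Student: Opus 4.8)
The plan is to prove the cubical-cell count by induction on the dimension $d$, using the inductive description of $\mathcal{A}^*_{\:d,n}$ given just above the statement, and then to deduce the diameter bound and the asymptotic claim by straightforward counting. The key structural fact I would exploit is the decomposition: the bounded cells of $\mathcal{A}^*_{\:d,n}$ split into those between $h^d_2$ and $h^d_3$ (obtained by shell-lifting a copy of $\mathcal{A}^*_{\:d-1,n-1}$ over the ridge $h^d_2\cap h^d_3$) and those on the far side of $h^d_3$ (forming a copy of $\mathcal{A}^*_{\:d,n-1}$). Let $q(d,n)$ denote the number of cubical cells of $\mathcal{A}^*_{\:d,n}$; I want to show $q(d,n)=\binom{n-d}{d}$ for $n\geq 2d$.

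First I would set up the recursion. A cube in $\mathcal{A}^*_{\:d,n}$ either lies beyond $h^d_3$, contributing the cubes of the sub-arrangement $\mathcal{A}^*_{\:d,n-1}$, or it is produced in the shell-lifted slab between $h^d_2$ and $h^d_3$. The shell-lifting operation raises a $(d-1)$-dimensional bounded cell $F$ of $\mathcal{A}^*_{\:d-1,n-1}$ into a $d$-dimensional prism over $F$; this prism is combinatorially a cube exactly when $F$ is itself a $(d-1)$-cube (since a cube is a prism over a cube). Thus the number of new cubes in the slab equals $q(d-1,n-1)$, and I obtain the recursion
$$q(d,n)=q(d,n-1)+q(d-1,n-1).$$
This is precisely Pascal's recursion for binomial coefficients, so together with the correct base cases it yields $q(d,n)=\binom{n-d}{d}$. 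The base cases to verify are the one-dimensional slack at the bottom of the shell-lifting (the lifted cells that touch the ridge $h^d_2\cap h^d_3$ are \emph{not} lifted and so do not generate prisms) and the low-dimensional anchor $d=2$, where the earlier count of $\binom{n-3}{2}$ four-gons in $\mathcal{A}^*_{\:2,n-1}$ from Section~\ref{2d} gives exactly $\binom{n-2}{2}$ after accounting for indexing, matching $\binom{n-d}{d}$ at $d=2$.

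Granting the cube count, the diameter inequality is immediate: each of the $\binom{n-d}{d}$ cubical cells has diameter $d$, every bounded cell has diameter at least $1\geq 0$, and the total number of bounded cells is $I=\binom{n-1}{d}$, so
$$\delta(\mathcal{A}^*_{\:d,n})=\frac{1}{I}\sum_{i=1}^{I}\delta(P_i)\geq \frac{d\binom{n-d}{d}}{\binom{n-1}{d}}.$$
Finally, for fixed $d$ both $\binom{n-d}{d}$ and $\binom{n-1}{d}$ are degree-$d$ polynomials in $n$ with leading coefficient $1/d!$, so their ratio tends to $1$ as $n\to\infty$; hence the lower bound tends to $d$, and since $\Delta_{\mathcal{A}}(d,n)\geq\delta(\mathcal{A}^*_{\:d,n})$ the largest possible average diameter is arbitrarily close to $d$ for $n$ large.

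I expect the main obstacle to be the first step: rigorously justifying that the shell-lifting of a $(d-1)$-cube yields a $d$-cube while no other lifted cell becomes cubical, and that the cubes beyond $h^d_3$ are in bijection with the cubes of $\mathcal{A}^*_{\:d,n-1}$ with no double counting at the interface $h^d_3$. This requires a careful bookkeeping of which cells are prisms over cubes versus prisms over other products of simplices, parallel to the $T,P,C$ case analysis carried out for $d=3$ in Section~\ref{3d}; the rest of the argument is routine binomial manipulation.
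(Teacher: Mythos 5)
Your overall strategy is the same as the paper's: split the cubical cells of ${\mathcal A}^*_{\:d,n}$ into those beyond $h^d_3$ (a copy of ${\mathcal A}^*_{\:d,n-1}$) and those in the shell-lifted slab between $h^d_2$ and $h^d_3$, and induct. But the recursion you write down is wrong, and the error is not a bookkeeping detail that can be deferred. You claim the slab contributes $q(d-1,n-1)$ new cubes, one for each $(d-1)$-cube of ${\mathcal A}^*_{\:d-1,n-1}$, and that $q(d,n)=q(d,n-1)+q(d-1,n-1)$ is ``precisely Pascal's recursion'' for ${n-d \choose d}$. It is not: under the claimed formula one has $q(d,n-1)+q(d-1,n-1)={n-d-1 \choose d}+{n-d \choose d-1}$, whereas Pascal gives ${n-d \choose d}={n-d-1 \choose d}+{n-d-1 \choose d-1}$; for instance with $d=3$, $n=7$ your recursion yields $1+6=7$ against the correct ${4 \choose 3}=4$. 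The culprit is exactly the phenomenon you mention in passing and then set aside as a ``base case to verify'': in a shell-lifting the vertices on the ridge $h^d_2\cap h^d_3$ are \emph{not} lifted, so a $(d-1)$-cube incident to the ridge does not become a prism over itself and hence does not produce a $d$-cube. The slab therefore contributes only the $(d-1)$-cubes of ${\mathcal A}^*_{\:d-1,n-1}$ that are \emph{not} incident to the ridge. The paper counts these by observing that the ridge is a hyperplane of ${\mathcal A}^*_{\:d,n}\cap h^d_2\cong{\mathcal A}^*_{\:d-1,n-1}$ whose removal leaves a copy of ${\mathcal A}^*_{\:d-1,n-2}$, giving ${n-d-1 \choose d-1}$ such cubes; the correct recursion is thus $q(d,n)=q(d,n-1)+q(d-1,n-2)$, which does telescope to ${n-d \choose d}$ from the base cases $q(d,2d)=1$ and $q(2,m)={m-2 \choose 2}$. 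Your proof as written needs this shift from $q(d-1,n-1)$ to $q(d-1,n-2)$, together with a justification of the off-ridge count, to go through.

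The rest of your argument is fine and matches the paper: each of the ${n-d \choose d}$ cubical cells has diameter $d$, the remaining cells contribute nonnegatively, $I={n-1 \choose d}$, and for fixed $d$ the ratio ${n-d \choose d}/{n-1 \choose d}$ of two degree-$d$ polynomials with equal leading coefficient tends to $1$, so the lower bound tends to $d$.
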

\begin{proof}
The arrangements $\mathcal{A}_{\:n, 2}^{*}$ and $\mathcal{A}_{\:n,
3}^{*}$ contain, respectively, $n-2 \choose 2$ and $n-3 \choose 3$
cubical cells. The arrangement $\mathcal{A}_{\:d,2d}^{*}$ has $1$
cubical cell. Since ${\mathcal A}^*_{\:d,n}$ is obtained inductively
from ${\mathcal A}^*_{\:d,n-1}$ by lifting ${\mathcal
A}^*_{\:d-1,n-1}$ over the ridge $h^d_2\cap h^d_3$, we count
separately the cubical cells between $h^d_2$ and $h^d_3$ and the
ones on the other side of $h^d_3$. The ridge $h^d_2\cap h^d_3$ is an
hyperplane of the arrangements ${\mathcal A}^*_{\:d,n}\cap h^d_2$
and ${\mathcal A}^*_{\:d,n}\cap h^d_3$ which are both
combinatorially equivalent to ${\mathcal A}^*_{\:d-1,n-1}$. Removing
$h^{d-1}_2$ from ${\mathcal A}^*_{\:d,n}\cap h^d_2$ yields an
arrangement combinatorially equivalent to ${\mathcal
A}^*_{\:d-1,n-2}$. It implies that $(n-2)-(d-1) \choose d-1$ cubical
cells of ${\mathcal A}^*_{\:d,n}\cap h^d_2$ are not incident to the
ridge $h^d_2\cap h^d_3$. The shell-lifting of these $n-d-1 \choose
d-1$ cubical cells (of dimension $d-1$) creates $n-d-1 \choose d-1$
cubical cells between $h^d_2$ and $h^d_3$. As removing $h^d_2$ from
${\mathcal A}^*_{\:d,n}$ yields an arrangement combinatorial
equivalent to ${\mathcal A}^*_{\:d,n-1}$, there are ${n-1-d \choose
d}$ cubical cells on the other side of $h^d_3$. Thus, ${\mathcal
A}^*_{\:d,n}$ contains ${n-d-1\choose d-1}+{n-d-1\choose
d}={n-d\choose d}$ cubical cells.
\end{proof}

Proposition~\ref{assymp} can be slightly strengthened to the following proposition.
\newpage 
\begin{prop}
Besides $n-d \choose d$ cubical cells, the arrangement ${\mathcal A}^*_{\:d,n}$
contains $(n-d)$ simplices and $(n-d)(n-d-1)$ bounded cells
combinatorially equivalent to a prism with a simplex base for $n\geq 2d$.
We have $\Delta_{\cal A}(d,n)\geq 1+\frac{(d-1){n-d \choose
d}+(n-d)(n-d-1)}{{n-1 \choose d}}$ for $n\geq 2d$.
\end{prop}
\begin{proof}
Similarly to Proposition~\ref{assymp}, we can inductively count $(n-d)$ simplices and
$(n-d)(n-d-1)$ bounded cells of ${\mathcal A}^*_{\:d,n}$
combinatorially equivalent to a prism with a simplex base. We have
$(n-1)-(d-1)$ simplices in ${\mathcal A}^*_{\:d,n}\cap h^d_2$ and,
since removing $h^{d-1}_2$ from ${\mathcal A}^*_{\:d,n}\cap h^d_2$
yields an arrangement combinatorially equivalent to ${\mathcal
A}^*_{\:d-1,n-2}$, only one of these $(n-d)$ simplices of ${\mathcal
A}^*_{\:d,n}\cap h^d_2$ is incident to the ridge $h^d_2\cap h^d_3$.
Thus, between $h^d_2$ and $h^d_3$, we have $1$ simplex incident to
the ridge $h^d_2\cap h^d_3$ and $(n-d-1)$ cells combinatorially
equivalent to a prism with a simplex base not incident to the ridge
$h^d_2\cap h^d_3$. In addition, $(n-d-1)$ cells combinatorially
equivalent to a prism with a simplex base are incident to the ridge
$h^d_2\cap h^d_3$ and between $h^d_2$ and $h^d_3$. These $(n-d-1)$
cells correspond to the truncations of the simplex
$\mathcal{A}_{\:d,d+1}^{*}$ by $h^d_k$ for $k=d+2,d+3,\dots,n$.
Thus, we have $2(n-d-1)$ cells combinatorially equivalent to a prism
with a simplex base between $h^d_2$ and $h^d_3$. Since the other side
of $h^d_3$ is combinatorially equivalent to $\mathcal{A}_{\:n-1,
d}^{*}$, it contains $(n-1-d)$ simplices and $(n-d-1)(n-d-2)$
bounded cells combinatorially equivalent to a prism with a simplex
base. Thus, $\mathcal{A}_{\:d,n}^{*}$ has
$(n-d-1)(n-d-2)+2(n-d-1)=(n-d)(n-d-1)$ cells combinatorially
equivalent to a prism with a simplex base and $(n-d)$ simplices. As
a prism with a simplex base has diameter $2$ and the diameter of a
bounded cell is at least $1$, we have
 $\delta({\mathcal A}^*_{\:d,n})\geq \frac{d{n-d \choose d}+2(n-d)(n-d-1)+{n-1
 \choose d}-{n-d \choose d}-(n-d)(n-d-1)}{{n-1 \choose d}}$ for $n\geq 2d$.
\end{proof}

\noindent {\bf Acknowledgments}$\quad$ The authors would like to
thank Komei Fukuda, Hiroki Nakayama, and Christophe Weibel for use of their
codes~\cite{cdd,nak,minksum} which helped to investigate small simple arrangements.
The authors are grateful to the anonymous referees for many helpful suggestions and
for pointing out relevant papers~\cite{B87,Z93}. Research supported by an NSERC Discovery grant,  a
MITACS grant and  the Canada Research Chair program.

\end{document}